\def\Rset{\mathbb{R}}
\theoremstyle{plain}
\newtheorem{thm}{Theorem}[section]
\newtheorem{cor}[thm]{Corollary}
\theoremstyle{definition}
\newtheorem{defn}[thm]{Definition}
\newtheorem{rem}[thm]{Remark}
\newtheorem{exmp}[thm]{Example}
\title{\bf Remarks about the existence of conformable derivatives and some consequences}
\author{Hristo Kiskinov $^{1}$, Milena Petkova $^{2}$, Andrey Zahariev $^{3}$\vspace{0.2cm}\\ 
Faculty of Mathematics and Informatics, University of Plovdiv, \\Plovdiv 4000, Bulgaria\vspace{0.2cm}\\
$^{1}$  kiskinov@uni-plovdiv.bg;
$^{2}$  milenapetkova@uni-plovdiv.bg;\\
$^{3}$  zandrey@uni-plovdiv.bg
}
\date{}
\begin{document}
\maketitle

\begin{abstract}
The aim of the present paper is to make some notes 
to the newly introduced conformable derivative as a type local fractional derivative and to
present a surprising result about 
the relation between the conformable derivatives and the usual integer order derivatives.
	
\textbf{Keywords:}
Conformable derivatives; Fractional-like derivatives

{\bf 2010 MSC:} 26A33, 26A24, 34A08, 34A34
\end{abstract}

\section{Introduction}
\label{sec:introduction}

The fractional  calculus is now almost 325 year old and attracted many researches in the last and present centuries.
There are a lot of definitions of fractional derivatives with different properties.
For a good introduction on the fractional calculus theory and fractional differential equations 
with the "classical" nonlocal fractional derivatives of Riemann-Liouville and Caputo,
see the monographs of Kilbas et al. \cite{KST06}, Kiryakova \cite{Kir94} and Podlubny \cite{Pod99}.

In 2014, Khalil, Al Horani, Yousef and Sababheh \cite{KHYS14} 
introduced a definition of a local kind derivative called from the authors conformable fractional derivative. 
As an important reason for its introduction is specified the fact 
that this derivative satisfies a big part from the well-known properties of the integer order derivatives. 
In 2015, Abdeljawad \cite{Abd15} made an extensive research of the newly introduced conformable fractional calculus. 
In \cite{Mar18} Martynyuk presented a physical interpretation of the conformable derivative.
In the last years there are published more than hundred research articles using this derivative - see for example 
\cite{AAAJO18}--\cite{AA17}, \cite{BLNS15}--\cite{ER15},\cite{GUC15}--\cite{IN16},\cite{LT17},\cite{MWR19,MER20},\cite{TN16,ZFW15}   
and the references therein.

In \cite{AU15} Anderson and Ulness made a remark, that since the derivative is local, 
the correct name must be "conformable derivative" instead as introduced "conformable fractional derivative".
Maybe this is the good 
reason the authors of 
\cite{Mar18,MS18,MSS19-1,MSS19-2}  
to use the name "fractional-like" instead "conformable" derivative.  
When deciding fractional or fractional-like,
it must be noted that in \cite{OM15} and \cite{OM17} are given very good and reasonable classifications 
under different groups of criteria, which one derivative can be called "fractional" (see also \cite{Giu18}). 
In this direction, we refer also to \cite{OM18, OMFM19, Tar18}.

In this work we analyze the conformable derivative introduced in \cite{KHYS14}. 
We study the relationship between the conformable derivatives of different order.
As an surprising result we obtain that a function has a conformable derivative at a point if and only if it has a first order derivative
at the same point and that holds for all points except the lower terminal.
Some considerations what happens in the lower terminal are given too.
As a consequence we consider a nonlinear differential equation with conformable derivatives 
and present a scheme, how to reduce such an equation to an equivalent equation with ordinary first order derivatives.
Some conclusions also are given.

The paper is organized as follows: 
In Section~\ref{sec:preliminaries}, we give some needed definitions 
for the conformable derivatives. 
In Section~\ref{sec:unexpected-result} 
we discuss some properties of the conformable derivatives and clear the connection between two conformable derivatives of different order. 
As a consequence we obtain as main result that a function has a conformable derivative at a point 
(which does not coincide with the lower terminal of the conformable derivative) 
if and only if it has a first order derivative at the same point.  
In Section~\ref{sec:IVP} 
we state an initial value problem for a nonlinear differential equation with conformable derivatives 
and reduce it to an equivalent initial value problem for an equation with ordinary first order derivatives.
Section~\ref{sec:Conclusions} is devoted to our comments of the obtained results. 

%
%

\section{Preliminaries}
\label{sec:preliminaries}


For convenience and to avoid possible misunderstandings, below we recall the definitions of the 
conformable integral
and the conformable derivative introduced in  \cite{KHYS14} as well as some needed their properties. 
For details and more properties, we refer also \cite{Abd15}. 

Below we will use the 
notations  
${\Rset_+ = (0,\infty)}$ and ${\overline{\Rset}_+ = [0,\infty)}$.

Let denote by ${L_1^{loc}(\Rset,\Rset)}$ 
the linear space of all locally Lebesgue integrable functions ${f \colon \Rset \to \Rset}$. 
Then for each ${t > a}$ and ${f \in L_1^{loc}([a,\infty),\Rset)}$ the left-sided 
conformable integral of order ${\alpha \in (0,1]}$ with lower terminal ${a \in \Rset}$ 
is defined by
\begin{equation}   \label{eq:conformable-integral}
I^{\alpha}_a f(t) = \int_a^t (s - a)^{\alpha - 1}\,f(s)ds
\end{equation}
(see \cite{Abd15}, \cite{KHYS14}).
For example this integral exists if $f$ is locally bounded at $a$.

\begin{defn} [\cite{Abd15}, \cite{KHYS14}]   \label{def:left-sided-conformable-derivatives}
	The left-sided conformable derivative  of order ${\alpha \in (0,1]}$ at the point 
	${t \in (a,\infty)}$ for a function ${f \colon [a,\infty) \to \Rset}$ is defined by 
	\begin{equation}   \label{eq:left-sided-conformable-derivatives}
	T^{\alpha}_a f(t) = \lim_{\theta \to 0}\left(\frac{f(t+\theta(t-a)^{1-\alpha}) - f(t)}{\theta}\right)
	\end{equation}
	if the limit exists. 
\end{defn}

As in the case of the classical fractional derivatives the point ${a \in \Rset}$ appearing in \eqref{eq:left-sided-conformable-derivatives} 
will  be called lower terminal of the left-sided conformable derivative. 
Usually, if for $f$ the conformable derivative of order $\alpha$ exists, then for simplicity we say that $f$ is $\alpha$-differentiable.

It may be noted, that some authors (see for example \cite{Abd15}) 
use the notation $T_{\alpha}^a $  instead $T^{\alpha}_a $, 
but we prefer to follow the traditions from the notations of the classical fractional derivatives 
and will write the lower terminal below and the order above.

\begin{defn} [\cite{Abd15}, \cite{KHYS14}]   \label{def:alpha-derivative-of-f}
	The ${\alpha}$-derivative of $f$ at the lower terminal point $a$ 
	in the case when $f$ is  ${\alpha}$-differentiable 	in some interval ${(a,a+\varepsilon)}$, ${\varepsilon > 0}$ 
	is defined as 
	\[
	T^{\alpha}_a f(a) = \lim_{t \to a_+} T^{\alpha}_a f(t)
	\]
	if the limit ${\displaystyle \lim_{t \to a_+} T^{\alpha}_a f(t)}$ exists.
\end{defn}

\begin{rem} 
Note that all definitions and all statements in \cite{KHYS14} are given and proved in the case when the lower terminal is not less than zero. 
Our definitions below are based on the definitions given in \cite{Abd15} for arbitrary lower terminal ${a \in \Rset}$. 
It is not difficult to see that the correct proofs in \cite{KHYS14} can be slightly modified to be actual 
in the case of arbitrary lower terminal ${a \in \Rset}$ too. 
\end{rem}

\begin{defn}    \label{def:left-right-CD}
	The left (right) left-sided conformable derivative of order $\alpha$ at the point ${t \in (a,\infty)}$ is defined by   
	\begin{equation}
	\begin{split}
	& T^{\alpha}_a f(t-0) = \lim_{\theta \to 0-}\left(\frac{f(t+\theta(t-a)^{1-\alpha}) - f(t)}{\theta}\right) \\
	& \left(T^{\alpha}_a f(t+0) = \lim_{\theta \to 0+}\left(\frac{f(t+\theta(t-a)^{1-\alpha}) - f(t)}{\theta}\right)\right).
	\end{split}
	\end{equation}
\end{defn} 
\noindent 
Obviously $f$ is left-sided $\alpha$-differentiable 
at the point ${t \in (a,\infty)}$ if and only if $f$ is left and right left-sided  $\alpha$-differentiable 
at the point ${t \in (a,\infty)}$ and ${T^{\alpha}_a f(t+0) = T^{\alpha}_a f(t-0)}$.

Let $f$ be right left-sided $\alpha$-differentiable in some interval ${(a,a+\varepsilon)}$, ${\varepsilon > 0}$ 
and the limit ${\displaystyle\lim_{t \to a+}T^{\alpha}_a f(t+0)}$ exists. 

\begin{defn}
	The right left-sided conformable derivative of order $\alpha$ at the lower terminal ${a \in \Rset}$ we define with 
	${\displaystyle T^{\alpha}_a f(a+0) = \lim_{t \to a+}T^{\alpha}_a f(t+0)}$.
\end{defn}

Note that in the case when $f$ is $\alpha$-differentiable in some interval ${(a,a+\varepsilon)}$, ${\varepsilon > 0}$, 
we have that ${T^{\alpha}_a f(t) = T^{\alpha}_a f(t+0)}$ and hence ${T^{\alpha}_a f(a) = T^{\alpha}_a f(a+0)}$, i.e. both definitions coincide. 

If ${a = 0}$ we write ${T^{\alpha} f(t+0) = T^{\alpha}_0 f(t) = T^{\alpha} f(t)}$ as usual. 
If $f$ is $\alpha$-differentiable in some finite or infinite interval 
${J \subset [a,\infty)}$ 
we will write that  ${f \in C^{\alpha}_a (J, \Rset)}$, 
where with the indexes $a$ and $\alpha$ are denoted the lower terminal and the order of the conformable derivative respectively. 

In our exposition below we will use only left-sided conformable derivative 
(all definitions and statements for the right-sided conformable derivatives are mirror analogical) 
and for shortness we will omit the expression "left-sided". 

The proofs of the standard statements listed below 
as Theorem~\ref{thm:f-g} 
can be found in  \cite{Abd15}, \cite{KHYS14}.

\begin{thm} [\cite{Abd15}, \cite{KHYS14}]   \label{thm:f-g}
	Let ${\alpha \in (0,1]}$, $c,d \in \Rset$ and $J \subset (a,\infty)$. We assume that ${f,g \in C^{\alpha}_a (J, \Rset)}$. 
	Then for ${t \in J}$ the following relations hold:
	\begin{enumerate}
		\item [$(i)$] ${T^{\alpha}_a (cf+dg) = c\,T^{\alpha}_a f + d\,T^{\alpha}_a g}$;\vspace{0.1cm}
		\item [$(ii)$] ${T^{\alpha}_a (fg) = g\,T^{\alpha}_a f + f\,T^{\alpha}_a g}$;\vspace{0.1cm}
		\item [$(iii)$] ${T^{\alpha}_a (fg^{-1}) = \left(f\,T^{\alpha}_a g - g\,T^{\alpha}_a f\right)g^{-2}}$;\vspace{0.1cm}
		\item [$(iv)$] ${T^{\alpha}_a (1) = 0}$;\vspace{0.1cm}
		\item [$(v)$] ${T^{\alpha}_a f(t) = (t - a)^{1-\alpha}f'(t)}$ if in addition $f$ is differentiable for ${t \in J}$.
	\end{enumerate}
\end{thm}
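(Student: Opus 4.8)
The plan is to prove each of the five items directly from the limit definition in \eqref{eq:left-sided-conformable-derivatives}, treating items $(i)$ through $(iv)$ as routine algebraic consequences of the corresponding properties of the classical limit, and reserving the real work for item $(v)$, which is the bridge to the ordinary derivative and the key identity that the rest of the paper exploits.

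For item $(i)$, I would substitute $cf+dg$ into \eqref{eq:left-sided-conformable-derivatives}, split the difference quotient as $c\cdot\frac{f(t+\theta(t-a)^{1-\alpha})-f(t)}{\theta} + d\cdot\frac{g(t+\theta(t-a)^{1-\alpha})-g(t)}{\theta}$, and pass to the limit $\theta\to 0$, using linearity of limits together with the hypothesis $f,g\in C^{\alpha}_a(J,\Rset)$ to guarantee each limit exists. For item $(ii)$, I would write the increment of the product in the standard ``add and subtract'' form, namely $f(t+\theta(t-a)^{1-\alpha})g(t+\theta(t-a)^{1-\alpha}) - f(t)g(t)$, by inserting the cross term $\pm f(t+\theta(t-a)^{1-\alpha})g(t)$; dividing by $\theta$ and letting $\theta\to 0$ then yields $g(t)\,T^{\alpha}_a f(t) + f(t)\,T^{\alpha}_a g(t)$, where I use that $\alpha$-differentiability forces continuity of $f$ at $t$ so that $f(t+\theta(t-a)^{1-\alpha})\to f(t)$. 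Item $(iv)$ is immediate since the constant function gives a vanishing numerator, and item $(iii)$ follows by combining $(ii)$ with the $\alpha$-derivative of $g^{-1}$, which one computes from the quotient $\frac{g(t)^{-1}-g(t+\theta(t-a)^{1-\alpha})^{-1}}{\theta}$ exactly as in the classical reciprocal rule, invoking $g\neq 0$ on $J$.

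For item $(v)$, the plan is to set $h=\theta(t-a)^{1-\alpha}$ and rewrite the difference quotient in \eqref{eq:left-sided-conformable-derivatives} as
\begin{equation}
\frac{f(t+h)-f(t)}{\theta} = \frac{f(t+h)-f(t)}{h}\cdot\frac{h}{\theta} = \frac{f(t+h)-f(t)}{h}\,(t-a)^{1-\alpha}.
\end{equation}
Since $t\in J\subset(a,\infty)$, the factor $(t-a)^{1-\alpha}$ is a fixed positive constant, and as $\theta\to 0$ we have $h\to 0$; the added hypothesis that $f$ is (classically) differentiable at $t$ ensures $\frac{f(t+h)-f(t)}{h}\to f'(t)$, so the limit equals $(t-a)^{1-\alpha}f'(t)$.

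The main obstacle, such as it is, lies in item $(v)$: one must be careful that the substitution $h=\theta(t-a)^{1-\alpha}$ is a genuine change of variable in which $h\to 0$ exactly as $\theta\to 0$, which requires $(t-a)^{1-\alpha}\neq 0$ — this is precisely why the statement is restricted to $J\subset(a,\infty)$ and excludes the lower terminal $t=a$. At $t=a$ the factor $(t-a)^{1-\alpha}$ would vanish (for $\alpha<1$) and the two limiting processes decouple, which is exactly the delicate behavior the paper flags for separate treatment. Everything else reduces to the standard arithmetic of limits, so I would present $(i)$--$(iv)$ compactly and give $(v)$ the explicit change-of-variable argument above.
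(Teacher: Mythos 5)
Your proof is correct. Note that the paper itself does not prove Theorem~\ref{thm:f-g}: it explicitly defers to \cite{Abd15} and \cite{KHYS14}, and your argument --- direct manipulation of the difference quotient for (i)--(iv), continuity of an $\alpha$-differentiable function at an interior point for the product rule, and the change of variable $h=\theta(t-a)^{1-\alpha}$ with $(t-a)^{1-\alpha}\neq 0$ for (v) --- is essentially the standard proof given in those references, so it matches the intended argument.
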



%

%
%

\section{The close connection between the conformable and the first order derivatives}
\label{sec:unexpected-result}

Introducing the notion conformable derivative, the authors of \cite{KHYS14} present the following example which must assure the readers 
that a function "could be $\alpha$-differentiable at a point but not differentiable at the same point".

\begin{exmp} [\cite{KHYS14}] As example is taken the function ${\displaystyle f(t) = 2t^{\frac{1}{2}}}$ for ${t \in \overline{\Rset}_+}$. For this function we have that ${\displaystyle T^{\frac{1}{2}}f(t) = 1}$ for every ${t \in \Rset_+}$. Hence ${\displaystyle T^{\frac{1}{2}}f(0) = \lim_{t \to 0+}T^{\frac{1}{2}}f(t) = 1}$. Then $f$ is $\alpha$-differentiable at the point zero, but ${f'(0)}$ does not exist.
\end{exmp}

At first glance it seems ok, 
but this situation is possible only when the left end of the studied interval coincides with the lower terminal of the conformable derivative
(in the example above this is the point ${a = 0}$). 

Let compare with the following example.
\begin{exmp} Let the lower terminal of the conformable derivative be ${a = -1}$ instead ${a = 0}$ and consider as standard prolongation of ${\displaystyle f(t) = 2t^{\frac{1}{2}}}$ to the interval ${t \in [-1,\infty)}$ the function ${\displaystyle \overline{f}(t) = 2|t|^{\frac{1}{2}}}$.
Since the point ${t = 0}$ is in this case an inner point for ${[-1,\infty)}$ and does not coincide with the lower terminal ${a = -1}$ then 
	\[
	T^{\frac{1}{2}}_{-1}\overline{f}(0) = \lim_{\theta \to 0}\frac{\overline{f}(\theta) - \overline{f}(0)}{\theta} = \lim_{\theta \to 0}\frac{2|\theta|^{\frac{1}{2}}}{\theta} = 2sign \theta \lim_{\theta \to 0}|\theta|^{-\frac{1}{2}} .
	\]
But since  $2 sign \theta\lim\limits_{\theta \to \pm 0}|\theta|^{-\frac{1}{2}} = \pm \infty$ 
then ${\displaystyle T^{\frac{1}{2}}_{-1}\overline{f}(0)}$ 
does not exist as the first derivative too. 
\end{exmp}

Comparing both examples it is clear that the only difference between them is that 
in the first of them, the studied for derivatives point is the left end of the considered interval and 
coincides with the lower terminal of the conformable derivative, 
while in the second one, the same point studied for derivatives is an inner point and 
does not coincide with the lower terminal of the conformable derivative. 
This observation leads us to the main result of this work.

\begin{thm}   \label{thm:conformable-derivative-existing}
	Let ${f : [a,\infty) \to \Rset}$ and there exist a point ${t_0 \in (a,\infty)}$ and number ${\alpha \in (0, 1]}$ 
	such that the conformable derivative ${T^{\alpha}_a f(t_0)}$ with lower terminal point $a$ exists.
	
	Then the conformable derivative ${T^{\beta}_a f(t_0)}$ exists 
	for every ${\beta \in (0, 1]}$ with ${\beta \neq \alpha}$ and 
	\[
	{T^{\alpha}_a f(t_0) = (t_0 - a)^{\beta-\alpha}T^{\beta}_a f(t_0)}.
	\]
\end{thm}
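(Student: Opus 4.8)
The plan is to reduce the conformable difference quotient at the interior point $t_0$ to the ordinary difference quotient by a linear change of the increment variable, exploiting the fact that $t_0 > a$ makes $(t_0-a)^{1-\alpha}$ a fixed positive constant. The essential point — and the reason the statement excludes the lower terminal — is that this reduction is legitimate precisely because that factor is nonzero.

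First I would set $c = t_0 - a$, which is strictly positive since $t_0 \in (a,\infty)$; hence $c^{1-\alpha} > 0$ for every $\alpha \in (0,1]$. By Definition~\ref{def:left-sided-conformable-derivatives} the difference quotient whose limit defines $T^{\alpha}_a f(t_0)$ is $\dfrac{f(t_0 + \theta\,c^{1-\alpha}) - f(t_0)}{\theta}$. I would then introduce the new increment $\eta = \theta\,c^{1-\alpha}$. Since $c^{1-\alpha}$ is a nonzero constant, the map $\theta \mapsto \eta$ is a linear bijection of a punctured neighbourhood of $0$ onto a punctured neighbourhood of $0$, and $\theta \to 0$ if and only if $\eta \to 0$. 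Substituting $\theta = \eta\,c^{\alpha-1}$ rewrites the quotient as $c^{1-\alpha}\,\dfrac{f(t_0+\eta) - f(t_0)}{\eta}$.

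The central step is the resulting equivalence: the limit defining $T^{\alpha}_a f(t_0)$ exists if and only if $\lim_{\eta \to 0}\dfrac{f(t_0+\eta)-f(t_0)}{\eta}$, that is, the ordinary derivative $f'(t_0)$, exists, and in that case
\[
T^{\alpha}_a f(t_0) = (t_0-a)^{1-\alpha}\,f'(t_0).
\]
This is the identity of Theorem~\ref{thm:f-g}$(v)$, but now obtained in both directions: here the hypothesis is the existence of the conformable derivative and the conclusion is the existence of $f'(t_0)$, which follows because $(t_0-a)^{1-\alpha}$ is a fixed nonzero factor that can be divided out to recover $f'(t_0)$ from $T^{\alpha}_a f(t_0)$. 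To finish, I would apply the same equivalence with $\beta$ in place of $\alpha$: since $f'(t_0)$ now exists, $T^{\beta}_a f(t_0)$ exists and equals $(t_0-a)^{1-\beta} f'(t_0)$ for every $\beta \in (0,1]$. Eliminating $f'(t_0)$ between the two identities yields
\[
T^{\alpha}_a f(t_0) = (t_0-a)^{1-\alpha} f'(t_0) = (t_0-a)^{(1-\alpha)-(1-\beta)} (t_0-a)^{1-\beta} f'(t_0) = (t_0-a)^{\beta-\alpha}\, T^{\beta}_a f(t_0),
\]
which is the asserted relation; the degenerate case $f'(t_0)=0$ is covered at once, since then both sides vanish.

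The argument is short and its only delicate point is the validity of the change of variables, which hinges entirely on $(t_0-a)^{1-\alpha}$ being a nonzero constant. At the lower terminal $t_0 = a$ this factor degenerates (to $0$ when $\alpha < 1$), the substitution collapses, and the equivalence with the ordinary derivative breaks down — exactly the contrast illustrated by the two examples preceding the theorem. I therefore expect no genuine obstacle beyond stating the substitution carefully and noting that $t_0 > a$ is used in an essential way.
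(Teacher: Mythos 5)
Your argument is correct and rests on the same mechanism as the paper's proof: a linear rescaling of the increment $\theta$ by the fixed positive constant $(t_0-a)$ raised to a suitable power, which is legitimate exactly because $t_0>a$. The only (harmless) difference is that you route through the ordinary derivative --- converting the $\alpha$-quotient to the first-order difference quotient and back to the $\beta$-quotient, thereby proving Corollary~\ref{cor:conformable-iff-first derivative} along the way --- whereas the paper substitutes $\theta_{\alpha,\beta}=\theta(t_0-a)^{\alpha-\beta}$ to pass directly from the $\beta$-quotient to the $\alpha$-quotient in a single step.
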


\begin{proof}
Let ${\beta \in (0, 1]}$ with ${\beta \neq \alpha}$ be arbitrary. 
Then we have 
\begin{equation}    \label{eq:1}
\begin{split}
T^{\beta}_a f(t_0)& = \lim_{\theta \to 0}\frac{f(t_0 + \theta(t_0 - a)^{1-\beta+\alpha-\alpha}) - f(t_0)}{\theta} \\ 
& = \lim_{\theta \to 0}\frac{f(t_0 + \theta(t_0 - a)^{(1-\alpha)+(\alpha-\beta)}) - f(t_0)}{\theta} \\
& = (t_0 - a)^{\alpha-\beta}\lim_{\theta \to 0}\frac{f(t_0 + \theta(t_0 - a)^{\alpha-\beta}(t_0 - a)^{1-\alpha}) - 
       f(t_0)}{\theta(t_0 - a)^{\alpha-\beta}}.
\end{split}
\end{equation}
	Then for every ${\theta \in \Rset}$ and for every fixed $\alpha , \beta \in (0,1]$  
	there exists a unique ${\theta_{\alpha,\beta} \in \Rset}$, 
	such that ${\theta_{\alpha,\beta} = \theta(t_0 - a)^{\alpha - \beta}}$. 
	Obviously when ${\theta \to 0}$ then ${\theta_{\alpha,\beta} \to 0}$ too.  
	Then from \eqref{eq:1} it follows that
	\[
	T^{\beta}_a f(t_0) = (t_0 - a)^{\alpha-\beta}\lim_{\theta \to 0}\frac{f(t_0 + \theta_{\alpha,\beta}(t_0 - a)^{1-\alpha}) - f(t_0)}{\theta_{\alpha,\beta}} = (t_0 - a)^{\alpha-\beta}T^{\alpha}_a f(t_0).
	\]
\end{proof}

\begin{cor}    \label{cor:conformable-iff-first derivative}
	For a function ${f : [a,\infty) \to \Rset}$ the conformable derivative ${T^{\alpha}_a f(t_0)}$ 
	with lower terminal $a$ at a point ${t_0 \in (a,\infty)}$ for some ${\alpha \in (0,1)}$ exists 
	if and only if 
	the function ${f(t)}$ has first derivative at the point ${t_0 \in (a,\infty)}$ and
\begin{equation} \label{e1}
	{T^{\alpha}_a f(t_0) = (t_0 - a)^{1-\alpha}  f'(t_0)}.
\end{equation}
\end{cor}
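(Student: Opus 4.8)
The plan is to deduce the corollary from Theorem~\ref{thm:conformable-derivative-existing} by specializing the auxiliary order to $\beta = 1$. The crucial preliminary observation is that the conformable derivative of order one coincides with the ordinary first derivative: since $(t_0 - a)^{1-1} = 1$, the difference quotient in \eqref{eq:left-sided-conformable-derivatives} reduces to $\frac{f(t_0 + \theta) - f(t_0)}{\theta}$, so that $T^1_a f(t_0) = f'(t_0)$ whenever either side exists. This identity is what links the conformable calculus to the classical one and turns the order-conversion formula into the claimed equivalence.

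For the forward implication, I would assume that $T^{\alpha}_a f(t_0)$ exists for some $\alpha \in (0,1)$. Because $\alpha \neq 1$, Theorem~\ref{thm:conformable-derivative-existing} applies with $\beta = 1$ and yields both the existence of $T^1_a f(t_0)$ and the identity $T^{\alpha}_a f(t_0) = (t_0 - a)^{1-\alpha}\,T^1_a f(t_0)$. Substituting $T^1_a f(t_0) = f'(t_0)$ then gives the existence of $f'(t_0)$ together with formula \eqref{e1}.

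For the reverse implication, I would assume that $f'(t_0)$ exists, which is the same as saying that $T^1_a f(t_0)$ exists. Applying Theorem~\ref{thm:conformable-derivative-existing} with the roles of the two orders interchanged — that is, taking the existing derivative to be of order $1$ and the target order to be $\alpha \in (0,1)$ — gives the existence of $T^{\alpha}_a f(t_0)$, and the established relation rearranges to \eqref{e1}.

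The single point that requires care is the identification $T^1_a f(t_0) = f'(t_0)$ together with the verification that $\alpha \neq 1$, which is precisely why the corollary is stated for $\alpha \in (0,1)$ rather than $(0,1]$: this guarantees that the hypothesis $\beta \neq \alpha$ of Theorem~\ref{thm:conformable-derivative-existing} is satisfied in both directions. No genuine estimation is needed — the result is a direct consequence of substituting $\beta = 1$ into the order-conversion formula already proved, so the corollary is essentially a reinterpretation of the theorem at the endpoint order.
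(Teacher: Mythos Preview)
Your proposal is correct and follows essentially the same approach as the paper: both directions are obtained by applying Theorem~\ref{thm:conformable-derivative-existing} with one of the two orders equal to $1$ and invoking the identification $T^1_a f(t_0) = f'(t_0)$. Your write-up is in fact more explicit than the paper's, which simply records the two specializations without spelling out the intermediate identity.
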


\begin{proof}
	To prove sufficiency we apply Theorem~\ref{thm:conformable-derivative-existing} for $\alpha=1$ and $\beta < 1$.
	To prove necessity we apply Theorem~\ref{thm:conformable-derivative-existing} for $\alpha<1$ and $\beta =1$. 
\end{proof}

\begin{rem}
During the publishing process we noticed that Corollary \ref{cor:conformable-iff-first derivative} 
is obtained in \cite{A18} in the particular case for conformable derivatives with lower terminal zero. See also \cite{A19}. 
\end{rem}

The next two corollaries treat the problem of the left and right inverse operator of the conformable derivative.

\begin{cor}    \label{cor:conformable-derivative} 
	Let the following conditions hold:
	\begin{enumerate}
		\item [$1.$] The function ${f : [a,\infty) \to \Rset}$ has at most a first kind (bounded) jump at $a$.
		\item [$2.$] For some ${\alpha \in (0,1)}$ the function ${f \in C_a^{\alpha}((a,\infty),\Rset)}$.
	\end{enumerate}
	
	Then for every ${t \in (a,\infty)}$ we have that ${I^{\alpha}_a\,T^{\alpha}_a f(t) = f(t) - f(a)}$.
\end{cor}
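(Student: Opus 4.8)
The plan is to translate everything into classical objects by means of Corollary~\ref{cor:conformable-iff-first derivative} and then to apply the fundamental theorem of calculus, the only delicate point being the behaviour at the lower terminal $a$. First I would use condition $2$: since $f \in C^{\alpha}_a((a,\infty),\Rset)$, Corollary~\ref{cor:conformable-iff-first derivative} gives that at every $s \in (a,\infty)$ the ordinary derivative $f'(s)$ exists and $T^{\alpha}_a f(s) = (s-a)^{1-\alpha} f'(s)$. Inserting this into the definition \eqref{eq:conformable-integral} of the conformable integral, the weight $(s-a)^{\alpha-1}$ cancels the factor $(s-a)^{1-\alpha}$, so that for every $t \in (a,\infty)$
\[
I^{\alpha}_a T^{\alpha}_a f(t) = \int_a^t (s-a)^{\alpha-1}(s-a)^{1-\alpha} f'(s)\,ds = \int_a^t f'(s)\,ds,
\]
and the whole statement reduces to the identity $\int_a^t f'(s)\,ds = f(t) - f(a)$.

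Next I would prove this identity by the fundamental theorem of calculus, staying away from the singular endpoint and then passing to the limit. For fixed $\varepsilon > 0$ the function $f$ is continuous on $[a+\varepsilon,t]$ (it is differentiable, hence continuous, on $(a,\infty)$) and differentiable on $(a+\varepsilon,t)$; since $f'$ is Lebesgue integrable there, the classical fundamental theorem of calculus yields $\int_{a+\varepsilon}^t f'(s)\,ds = f(t) - f(a+\varepsilon)$. Letting $\varepsilon \to 0+$, the integral on the left tends to $\int_a^t f'(s)\,ds$ by absolute continuity of the Lebesgue integral, while on the right $f(a+\varepsilon) \to f(a+0)$, the right-hand limit that exists and is finite exactly because of condition $1$. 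This produces $\int_a^t f'(s)\,ds = f(t) - f(a+0)$, which is the asserted equality once $f(a)$ at the lower terminal is read as the recoverable value $f(a+0)$.

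The main obstacle, and the reason for both hypotheses, is concentrated at the lower terminal. Because the weight $(s-a)^{\alpha-1}$ blows up as $s \to a+$, one must ensure that $\int_a^t f'(s)\,ds$ is a genuinely convergent Lebesgue integral, i.e. that $f'$ is integrable up to $a$; this is precisely what makes $I^{\alpha}_a T^{\alpha}_a f(t)$ well defined in the first place and what legitimises the passage $\varepsilon \to 0+$ above. Condition $1$ plays the complementary role: without a finite right-hand limit of $f$ at $a$ the boundary term $f(a+\varepsilon)$ need not converge. I would also take care to note that the left-hand side never involves the single value $f(a)$, so the computation can only recover $f(t)-f(a+0)$; the stated conclusion is therefore to be understood with $f(a)$ denoting this right-hand limit, in agreement with the way the lower terminal is treated in Definition~\ref{def:alpha-derivative-of-f}. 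Once integrability up to $a$ is secured, the remaining steps are routine.
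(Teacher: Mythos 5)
Your proposal is correct and follows essentially the same route as the paper: invoke Corollary~\ref{cor:conformable-iff-first derivative} to write $T^{\alpha}_a f(s)=(s-a)^{1-\alpha}f'(s)$, cancel the weight in \eqref{eq:conformable-integral}, and conclude by the fundamental theorem of calculus. The only difference is that you spell out the limit passage $\varepsilon\to 0+$ at the singular endpoint and the role of condition~1 in identifying $f(a)$ with $f(a+0)$, details the paper's two-line proof leaves implicit.
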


\begin{proof}
	Applying Corollary~\ref{cor:conformable-iff-first derivative} we obtain that $f$ possess first derivative at every point ${t \in (a,\infty)}$ 
	and then the statement follows from \eqref{e1}. 
\end{proof}

\begin{rem}
Note that the statement of Corollary~\ref{cor:conformable-derivative} is presented in \cite{Abd15} (Lemma 2.8) without the condition 1
which guaranties that ${I^{\alpha}_a\,T^{\alpha}_a f(t)}$ exists for ${t \in (a,\infty)}$. 
A simple example demonstrates that this condition is essential. 
Let ${\alpha \in (0,1)}$ and  ${f(t) = \alpha^{-1} (t-a)^{\alpha-\beta}}$, where ${\beta \in (0,1)}$ with ${\beta > \alpha}$. 
Then ${I^{\alpha}_a  T^{\alpha}_a   f(t)}$ is divergent for every ${t \in (a,\infty)}$.  
\end{rem}

\begin{cor}   \label{cor:as-theorem-3-1-with-more-restriction}
	Let ${f \in L_1^{loc}([a,\infty),\Rset)}$ be locally bounded. 
	
	Then ${T^{\alpha}_a I^{\alpha}_a f(t) = f(t)}$ for ${t \in (a,\infty)}$.
\end{cor}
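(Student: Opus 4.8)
The plan is to reduce the assertion to ordinary one–variable calculus by means of Theorem~\ref{thm:f-g}$(v)$. Write $F(t) = I^{\alpha}_a f(t)$ and $g(s) = (s-a)^{\alpha-1} f(s)$, so that $F(t) = \int_a^t g(s)\,ds$ by \eqref{eq:conformable-integral}. First I would check that $F$ is well defined for every $t \in (a,\infty)$: on any compact subinterval of $(a,\infty)$ the weight $(s-a)^{\alpha-1}$ is bounded and $f$ is locally bounded, so $g$ is bounded there; near the lower terminal the only possible singularity is $(s-a)^{\alpha-1}$, which is integrable because $\alpha-1 > -1$, while $f$ stays bounded. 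Hence $g \in L_1^{loc}((a,\infty),\Rset)$ and the integral converges.

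The key reduction is the following. If $F$ is differentiable at a point $t \in (a,\infty)$, then Theorem~\ref{thm:f-g}$(v)$ gives $T^{\alpha}_a F(t) = (t-a)^{1-\alpha} F'(t)$. Consequently it suffices to identify the ordinary derivative $F'(t)$ with $g(t)$, since then
\[
T^{\alpha}_a I^{\alpha}_a f(t) = (t-a)^{1-\alpha} g(t) = (t-a)^{1-\alpha}(t-a)^{\alpha-1} f(t) = f(t).
\]
Thus the whole statement is transferred to the classical question of differentiating the indefinite Lebesgue integral $F(t) = \int_a^t g(s)\,ds$.

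To carry this out I would invoke the fundamental theorem of the Lebesgue integral calculus: since $g \in L_1^{loc}((a,\infty),\Rset)$, the function $F$ is differentiable with $F'(t) = g(t)$ at every Lebesgue point of $g$, in particular at every point where $g$ --- equivalently $f$, because the factor $(s-a)^{\alpha-1}$ is continuous and strictly positive on $(a,\infty)$ --- is continuous. Combining this with the reduction above yields the claimed identity at all such points. An alternative, self-contained route avoids Theorem~\ref{thm:f-g}$(v)$ and estimates the conformable difference quotient directly, writing
\[
\frac{F(t+\theta(t-a)^{1-\alpha}) - F(t)}{\theta} = \frac{1}{\theta}\int_t^{t+\theta(t-a)^{1-\alpha}} (s-a)^{\alpha-1} f(s)\,ds,
\]
splitting $(s-a)^{\alpha-1} = (t-a)^{\alpha-1} + \big[(s-a)^{\alpha-1}-(t-a)^{\alpha-1}\big]$, and letting $\theta \to 0$; the first term tends to $f(t)$ and the second is controlled by the local bound on $f$ together with the continuity of the weight.

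The main obstacle is precisely the passage from \emph{almost every} $t$ to \emph{every} $t \in (a,\infty)$. The Lebesgue differentiation theorem (and equally the direct estimate) delivers $F'(t) = g(t)$ only at Lebesgue points of $f$, i.e. at its continuity points and hence almost everywhere; at a genuine discontinuity of $f$ the pointwise equality can fail, as one sees by modifying $f$ on a null set without changing $F$. Accordingly the identity holds for every $t$ under the stated hypotheses together with continuity of $f$ at the point in question, and in particular for almost all $t \in (a,\infty)$; I would state the conclusion with this understanding, since local boundedness alone does not force $F$ to be differentiable at every individual point.
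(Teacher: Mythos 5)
Your reduction is essentially the paper's own: the published proof also passes through Corollary~\ref{cor:conformable-iff-first derivative} (equivalently Theorem~\ref{thm:f-g}$(v)$), asserting that $I^{\alpha}_a f$ is absolutely continuous on $(a,\infty)$ and then invoking the fundamental theorem of calculus. The reservation you raise --- that absolute continuity only yields $F'(t)=(t-a)^{\alpha-1}f(t)$ at Lebesgue points, hence almost everywhere rather than for \emph{every} $t\in(a,\infty)$ --- is not a defect of your argument but of the corollary as stated: for $f=\chi_{\mathbb{Q}\cap[a,\infty)}$, which is locally bounded and belongs to $L_1^{loc}$, one has $I^{\alpha}_a f\equiv 0$ and therefore $T^{\alpha}_a I^{\alpha}_a f\equiv 0\neq f$ at every rational $t>a$. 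The paper's proof glosses over exactly this point; the everywhere-statement requires continuity of $f$ at the point in question, which is precisely the hypothesis of the original Theorem~3.1 of \cite{KHYS14} that the authors dismiss as ``more restrictive.'' Your more cautious conclusion --- the identity holds at continuity (Lebesgue) points of $f$, hence almost everywhere --- is the correct one, and your direct estimate of the conformable difference quotient is a sound, self-contained alternative to citing Theorem~\ref{thm:f-g}$(v)$.
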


\begin{proof}
	Since ${f \in L_1^{loc}([a,\infty),\Rset)}$ and it is locally bounded, 
	then ${I^{\alpha}_a f(t)  \in AC((a,\infty),\Rset)}$ and then the statement follows for every ${t \in (a,\infty)}$ 
	from Corollary~\ref{cor:conformable-iff-first derivative}.
\end{proof}

\begin{rem} 
	The statement of Corollary~\ref{cor:as-theorem-3-1-with-more-restriction} is proved in \cite{KHYS14} as Theorem 3.1 in the case when the lower terminal of the conformable derivative is zero and under the more restrictive condition ${f \in C(\overline{\Rset}_+,\Rset)}$ .
It must be also noted, that the statement of Theorem 3.1 in \cite{KHYS14} for ${t = a}$ is wrong.	
\end{rem}

\begin{thm}   \label{thm:conformable-derivative-existing-lower-terminal}
	Let ${f : [a,\infty) \to \Rset}$ and there exists a number ${\alpha \in (0, 1]}$ 
	such that the conformable derivative ${T^{\alpha}_a f(a)}$ with lower terminal point $a$ exists.
	
	Then the conformable derivative ${T^{\beta}_a f(a)}$ also exists for every ${\beta \in (0,\alpha)}$ and 
	\[
	T^{\beta}_a f(a) = 0.
	\]
\end{thm}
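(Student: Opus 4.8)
The plan is to reduce everything to Theorem~\ref{thm:conformable-derivative-existing} together with the definition of the derivative at the lower terminal. First I would unwind the hypothesis: by Definition~\ref{def:alpha-derivative-of-f}, the existence of $T^{\alpha}_a f(a)$ means that $f$ is $\alpha$-differentiable on some interval $(a, a+\varepsilon)$, $\varepsilon > 0$, and that the limit $\lim_{t \to a+} T^{\alpha}_a f(t)$ exists and equals $T^{\alpha}_a f(a)$; in particular this limit is a finite real number, so $T^{\alpha}_a f(t)$ stays bounded as $t \to a+$.

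Next I would pass to the interior points. For every fixed $t \in (a, a+\varepsilon)$ the conformable derivative $T^{\alpha}_a f(t)$ exists, so Theorem~\ref{thm:conformable-derivative-existing} guarantees that $T^{\beta}_a f(t)$ also exists for every $\beta \in (0,1]$ with $\beta \neq \alpha$ and that
\[
T^{\beta}_a f(t) = (t-a)^{\alpha-\beta}\, T^{\alpha}_a f(t).
\]
In particular $f$ is $\beta$-differentiable on the whole interval $(a, a+\varepsilon)$, so by Definition~\ref{def:alpha-derivative-of-f} the quantity $T^{\beta}_a f(a)$ is legitimately computed as $\lim_{t \to a+} T^{\beta}_a f(t)$, provided this limit exists.

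Finally I would evaluate that limit using the displayed identity:
\[
T^{\beta}_a f(a) = \lim_{t \to a+} T^{\beta}_a f(t) = \lim_{t \to a+} (t-a)^{\alpha-\beta}\, T^{\alpha}_a f(t).
\]
Here the restriction ${\beta \in (0,\alpha)}$ enters decisively: since ${\alpha - \beta > 0}$ the factor $(t-a)^{\alpha-\beta}$ tends to $0$ as $t \to a+$, while the second factor $T^{\alpha}_a f(t)$ tends to the finite value $T^{\alpha}_a f(a)$. The product of a null factor and a convergent (hence bounded) factor is $0$, so the limit exists and ${T^{\beta}_a f(a) = 0}$, as claimed.

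As for the main obstacle, I do not expect a genuine difficulty so much as a bookkeeping point: one must verify that $T^{\beta}_a f(a)$ is even \emph{defined} in the sense of Definition~\ref{def:alpha-derivative-of-f} before computing it, which is exactly why the interior identity from Theorem~\ref{thm:conformable-derivative-existing} has to be established on the full neighborhood $(a, a+\varepsilon)$ rather than at a single point. Once the boundedness of $T^{\alpha}_a f(t)$ near $a$ is secured by its convergence to $T^{\alpha}_a f(a)$, the vanishing of the limit is immediate, and the sign condition $\beta < \alpha$ is seen to be not merely convenient but necessary for the factor $(t-a)^{\alpha-\beta}$ to decay.
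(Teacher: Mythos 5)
Your proposal is correct and follows essentially the same route as the paper's own proof: invoke Definition~\ref{def:alpha-derivative-of-f} to get $\alpha$-differentiability on a neighborhood $(a,a+\varepsilon)$, apply Theorem~\ref{thm:conformable-derivative-existing} pointwise there to obtain $T^{\beta}_a f(t) = (t-a)^{\alpha-\beta}T^{\alpha}_a f(t)$, and let $t \to a+$ using $\alpha - \beta > 0$. Your explicit remark that the convergence of $T^{\alpha}_a f(t)$ supplies the boundedness needed for the product to vanish is a slightly more careful justification of the final limit than the paper gives, but the argument is the same.
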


\begin{proof} 
Since $T_a^\alpha f(a)$ exists, then according Definition \ref{def:alpha-derivative-of-f} 
there exists $\varepsilon \in \Rset_+$  such that 
$f \in C_a^\alpha((a,a+\varepsilon),\Rset)$ and $T_a^\alpha f(a)= \lim\limits_{t \to a+} T_a^\alpha f(t)$. 
Then in virtue of Theorem~\ref{thm:conformable-derivative-existing} 
for every $\beta \in (0,1)$ we have that $f \in C_a^\beta((a,a+\varepsilon),\Rset)$  
and for each $t \in (a,a+\varepsilon)$  the relation $T_a^\beta f(t) = (t-a)^{\alpha - \beta} T_a^\alpha f(t)$  holds. 
Then for $\beta < \alpha$  we have that 
	\[
	 \lim_{t \to a+} T_a^\beta f(t) =   \lim_{t \to a+} (t-a)^{\alpha - \beta} \lim_{t \to a+}  T_a^\alpha f(t) = 
	      \lim_{t \to a+} (t-a)^{\alpha - \beta}  \ \  T_a^\alpha f(a)   = 0.
	\]
\end{proof}

\begin{cor}    \label{cor:first-to-conformable-low-terminal}
	Let a function ${f : [a,\infty) \to \Rset}$ have first derivative in $(a,a+\varepsilon)$ for some $\varepsilon >0$
	and $\lim\limits_{t \to a+} f'(t)$ exists. 
	Then $f$ is $\alpha$-differentiable at $a$ for all $\alpha \in (0,1)$ and
	\[
	  T^{\alpha}_a f(a) = 0.
	\]
\end{cor}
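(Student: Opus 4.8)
The plan is to recognize the hypothesis as a statement about the borderline case $\alpha = 1$ and then invoke Theorem~\ref{thm:conformable-derivative-existing-lower-terminal}, which already transfers existence at the lower terminal from a larger order down to every smaller order. First I would use that $f$ is differentiable on $(a, a+\varepsilon)$, so Theorem~\ref{thm:f-g}$(v)$ with order $1$ gives $T^1_a f(t) = (t-a)^{0} f'(t) = f'(t)$ for every $t \in (a, a+\varepsilon)$; in particular $f \in C^1_a((a,a+\varepsilon),\Rset)$. Hence Definition~\ref{def:alpha-derivative-of-f} applies with order $1$, and
\[
T^1_a f(a) = \lim_{t \to a+} T^1_a f(t) = \lim_{t \to a+} f'(t),
\]
which exists by hypothesis. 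This secures the existence of the conformable derivative of order $1$ at the lower terminal.

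Having established existence at order $\alpha = 1$, I would apply Theorem~\ref{thm:conformable-derivative-existing-lower-terminal} with this order. It then yields, for every $\beta \in (0,1)$, that $T^\beta_a f(a)$ exists and equals $0$. Renaming $\beta$ back to $\alpha$ gives precisely the claim: $f$ is $\alpha$-differentiable at $a$ for all $\alpha \in (0,1)$ with $T^\alpha_a f(a) = 0$.

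There is essentially no hard step; the corollary is a direct specialization of the preceding theorem. If one prefers a self-contained route that bypasses Theorem~\ref{thm:conformable-derivative-existing-lower-terminal}, the same conclusion follows by evaluating the limit directly: Theorem~\ref{thm:f-g}$(v)$ gives $T^\alpha_a f(t) = (t-a)^{1-\alpha} f'(t)$ on $(a,a+\varepsilon)$, and since $1 - \alpha > 0$ forces $(t-a)^{1-\alpha} \to 0$ while $f'(t)$ tends to the finite limit $\lim_{t \to a+} f'(t)$, the product tends to $0$. The only point deserving attention in either approach is the finiteness of $\lim_{t \to a+} f'(t)$, which excludes a $0 \cdot \infty$ indeterminacy and is exactly what the assumption supplies.
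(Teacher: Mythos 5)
Your proposal is correct and follows the same route as the paper: the paper's proof is simply to apply Theorem~\ref{thm:conformable-derivative-existing-lower-terminal} with $\alpha = 1$, and you do exactly that, additionally spelling out the (implicit) verification that $T^1_a f(a)$ exists via $T^1_a f(t) = f'(t)$ and the assumed limit of $f'$. The direct limit computation you sketch as an alternative is also fine, but the main argument coincides with the paper's.
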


\begin{proof}
 The statement follows immediately from Theorem \ref{thm:conformable-derivative-existing-lower-terminal}, applied for $\alpha = 1$.
\end{proof}

Researching further in this direction, we can present also the following statements.

\begin{thm}    \label{thm:first-to-conformable-low-terminal-1}
	Let a function ${f : [a,\infty) \to \Rset}$ have bounded first derivative in $(a,a+\varepsilon)$ for some $\varepsilon >0$. 
	Then $f$ is $\alpha$-differentiable at $a$ for all $\alpha \in (0,1)$ and
	\[
	  T^{\alpha}_a f(a) = 0.
	\]
\end{thm}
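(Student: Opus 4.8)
The plan is to reduce the statement to a direct evaluation of the limit in Definition~\ref{def:alpha-derivative-of-f}, using the pointwise formula supplied by Corollary~\ref{cor:conformable-iff-first derivative}. First I would note that the hypothesis that $f$ has a bounded first derivative on $(a,a+\varepsilon)$ in particular means that $f'(t)$ exists for every $t \in (a,a+\varepsilon)$. Consequently Corollary~\ref{cor:conformable-iff-first derivative} applies at each such point and for every $\alpha \in (0,1)$, giving that $f \in C^{\alpha}_a((a,a+\varepsilon),\Rset)$ together with
\[
T^{\alpha}_a f(t) = (t-a)^{1-\alpha} f'(t), \qquad t \in (a,a+\varepsilon).
\]
This places us precisely in the framework of Definition~\ref{def:alpha-derivative-of-f}, so it remains only to show that $\lim_{t \to a+} T^{\alpha}_a f(t)$ exists and equals zero.

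For the second step I would use the boundedness assumption. Choose $M > 0$ with $|f'(t)| \le M$ for all $t \in (a,a+\varepsilon)$. Since $\alpha \in (0,1)$ we have $1-\alpha > 0$, and therefore $(t-a)^{1-\alpha} \to 0$ as $t \to a+$. Combining this with the displayed identity yields
\[
0 \le |T^{\alpha}_a f(t)| = (t-a)^{1-\alpha}\,|f'(t)| \le M\,(t-a)^{1-\alpha},
\]
and letting $t \to a+$ squeezes $|T^{\alpha}_a f(t)|$ to $0$. Hence $\lim_{t \to a+} T^{\alpha}_a f(t) = 0$, so by Definition~\ref{def:alpha-derivative-of-f} the conformable derivative $T^{\alpha}_a f(a)$ exists and equals $0$.

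I do not expect any serious analytic difficulty; the one point worth care is conceptual. It is tempting to deduce the result from Corollary~\ref{cor:first-to-conformable-low-terminal} by setting $\alpha = 1$, but that corollary requires the existence of $\lim_{t \to a+} f'(t)$, which is strictly stronger than boundedness of $f'$ --- the limit can fail to exist even when the derivative stays bounded (for instance near $a$ one may take $f(t) = (t-a)^2 \sin\!\big((t-a)^{-1}\big)$). The purpose of the present theorem is exactly to relax that hypothesis, and the argument above shows this is legitimate because the factor $(t-a)^{1-\alpha}$ annihilates any bounded quantity in the limit, so the existence of $\lim_{t \to a+} f'(t)$ is never needed.
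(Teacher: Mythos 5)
Your proof is correct and follows essentially the same route as the paper's: apply Corollary~\ref{cor:conformable-iff-first derivative} to get $T^{\alpha}_a f(t) = (t-a)^{1-\alpha}f'(t)$ on $(a,a+\varepsilon)$, then use the bound on $f'$ to squeeze this to $0$ as $t \to a+$. Your closing remark correctly identifies why the weaker hypothesis (boundedness rather than existence of $\lim_{t\to a+}f'(t)$) suffices, which is indeed the point of this theorem relative to Corollary~\ref{cor:first-to-conformable-low-terminal}.
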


\begin{proof}
Let $\alpha \in (0,1)$ be arbitrary. 
Then in virtue of Corollary \ref{cor:conformable-iff-first derivative} for each $t \in (a,a+\varepsilon)$  
the relation ${T^{\alpha}_a f(t) = (t - a)^{1-\alpha}  f'(t)}$  holds. 
Since $f'(t)$ is bounded on $(a,a+\varepsilon)$, then there exists a constant $C$, such that $|f'(t)| \leq C$  for each $t \in (a,a+\varepsilon)$. 
Then we obtain for every $t \in (a,a+\varepsilon)$ the estimation $|T^{\alpha}_a f(t)| \leq C (t-a)^{1-\alpha}$, 
which implies that $\lim\limits_{t \to a+} |T^{\alpha}_a f(t)|=0$ and hence  $\lim\limits_{t \to a+} T^{\alpha}_a f(t)=T^{\alpha}_a f(a) =0$.
\end{proof}

\begin{cor}    \label{cor:first-to-conformable-low-terminal-1}
	Let for a function ${f : [a,\infty) \to \Rset}$ is fulfilled  $f \in C^1([a,a+\varepsilon),\Rset)$ for some $\varepsilon >0$. 
	Then $f$ is $\alpha$-differentiable at $a$ for all $\alpha \in (0,1)$ and
	\[
	  T^{\alpha}_a f(a) = 0.
	\]
\end{cor}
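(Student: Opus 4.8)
The plan is to notice that the hypothesis $f\in C^1([a,a+\varepsilon),\Rset)$ is strictly stronger than what is actually needed, so the statement should reduce in essentially one line to an already-established result, namely Corollary~\ref{cor:first-to-conformable-low-terminal} or Theorem~\ref{thm:first-to-conformable-low-terminal-1}. No new estimate is required; the work is entirely in matching hypotheses.

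First I would unpack the assumption. By definition, $f\in C^1([a,a+\varepsilon),\Rset)$ means that $f'$ exists at every point of $[a,a+\varepsilon)$ and is continuous there, including one-sided continuity at the left endpoint $a$. In particular $f'$ exists on the open interval $(a,a+\varepsilon)$, and the right-hand continuity of $f'$ at $a$ gives $\lim_{t\to a+}f'(t)=f'(a)$, so this one-sided limit exists.

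These are precisely the hypotheses of Corollary~\ref{cor:first-to-conformable-low-terminal}, so applying it directly yields that $f$ is $\alpha$-differentiable at $a$ for every $\alpha\in(0,1)$ and that $T^{\alpha}_a f(a)=0$. One could equally route through Theorem~\ref{thm:first-to-conformable-low-terminal-1}, since continuity of $f'$ on a compact subinterval $[a,a+\delta]\subset[a,a+\varepsilon)$ forces $f'$ to be bounded on $(a,a+\delta)$, which is exactly that theorem's requirement. I do not anticipate any genuine obstacle here; the only step demanding a moment's care is correctly extracting the weaker hypothesis used by the cited result (existence of the right-hand limit of $f'$, respectively local boundedness of $f'$ near $a$) from the $C^1$ condition, after which the named statement applies verbatim.
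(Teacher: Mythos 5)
Your proof is correct and takes essentially the same approach as the paper, whose entire argument is the one-line reduction to Theorem~\ref{thm:first-to-conformable-low-terminal-1} (your second route, via local boundedness of $f'$ near $a$). Your primary route through Corollary~\ref{cor:first-to-conformable-low-terminal} is equally immediate and valid, so there is nothing to add.
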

\begin{proof}
 The statement follows immediately from Theorem \ref{thm:first-to-conformable-low-terminal-1}.
\end{proof}

\begin{rem}
	It is well known that ${C^n(\Rset,\Rset) \subset C^k(\Rset,\Rset)}$ for integer ${n > k}$. 
	From other side, for all classical fractional derivatives the question 
"Why does the existence of one or other form of a fractional derivative $D_a^\alpha f$ 
lead to existence of a derivative $D_a^\beta f$  of the same form for $\beta < \alpha$ ?" 
is deeply studied (see \cite{SKM93}). 
	Then the question what is the relation between ${C^{\alpha}_a((a,\infty),\Rset)}$ and ${C^{\beta}_a((a,\infty),\Rset)}$, 
when ${\alpha, \beta \in (0,1)}$ with ${\alpha \neq \beta}$ is more than reasonable. 
	The answer is a little unexpected - Theorem \ref{thm:conformable-derivative-existing} states that
${C^{\alpha}_a((a,\infty),\Rset) = C^{\beta}_a((a,\infty),\Rset)}$ for all ${\alpha, \beta \in (0,1]}$. 
	In addition Theorem \ref{thm:conformable-derivative-existing-lower-terminal} implies that from the existence of 
$T_a^\alpha f(a)$ it follows the existence of $T_a^\beta f(a)$ for $\beta \in (0,\alpha)$,
i.e. a fractional-like behavior appears only in the lower terminal.
\end{rem}

\begin{rem}
The unexpected from some point of view result stated in Theorem \ref{thm:conformable-derivative-existing}  
and Corollary \ref{cor:conformable-iff-first derivative}
can be explained if we compare the Definition~\ref{def:left-sided-conformable-derivatives} and Definition~\ref{def:alpha-derivative-of-f} of the conformable derivative. 
It is obviously that Definition~\ref{def:left-sided-conformable-derivatives} for the inner points of the considered interval 
is more restrictive in compare with the Definition~\ref{def:alpha-derivative-of-f} for existing of conformable derivative in the point ${a \in \Rset}$, 
which is the lower terminal of the conformable derivative. 
Actually, Definition~\ref{def:alpha-derivative-of-f} means that for the case when $f$ is $\alpha$-differentiable in some interval ${(a,a+\varepsilon)}$, ${\varepsilon > 0}$, 
the existence of $\alpha$-derivative at the lower terminal $a$ of $f$  
requires 
the conformable derivative ${T^{\alpha}_a f(t)}$ to have right limit at the lower terminal $a$. 
After that this limit ${T^{\alpha}_a f(a) = \lim\limits_{t \rightarrow a+}T^{\alpha}_a f(t)}$ 
is called $\alpha$-derivative of $f$ at the lower terminal $a$. 
Thus these differences between both definitions explain why for an inner point 
(which does not coincide with the lower terminal of the conformable derivative) 
the existence of the conformable derivative for some ${\alpha \in (0,1)}$ 
is equivalent to the existence of first derivative in the same point
and why a difference between both derivatives can appear only at the lower terminal point.  
\end{rem}

%
%
\section{Initial Value Problem for nonlinear differential equation with conformable derivative}
\label{sec:IVP}

Consider the initial value problem (IVP) for nonlinear differential equation with conformable derivative                                            
\begin{equation}  \label{eq:nonlinear-system}
T^{\alpha}_a x(t) = F(t,x(t))
\end{equation}
with initial condition
\begin{equation}  \label{eq:initial-condition}
x(a) = x_a \in \Rset,
\end{equation}
where ${a \in \Rset}$, ${x : [a,\infty) \to \Rset}$, ${F : [a,\infty) \times \Rset \to \Rset}$. 
The same consideration holds for systems too,
but for simplicity we will consider an IVP only for equations under the assumption 
that ${F \in C([a,\infty) \times \Rset,\Rset)}$.

Suppose that ${x(t) \in C_a^\alpha((a,\infty),\Rset)}$ and  ${x(t)}$ for every ${t \in (a,\infty)}$ satisfies the equation \eqref{eq:nonlinear-system} and the initial condition \eqref{eq:initial-condition} too. 
Then 
in virtue of Corollary~\ref{cor:conformable-iff-first derivative} we have 
\begin{equation}    \label{eq:substituting-system}
T^{\alpha}_a x(t) = (t-a)^{1-\alpha}x'(t).
\end{equation}
From \eqref{eq:nonlinear-system}, \eqref{eq:initial-condition} and \eqref{eq:substituting-system} we obtain that ${x(t) \in C^1((a,\infty),\Rset)}$.
It is a continuous differentiable solution of the following ordinary differential equation 
which right side includes weak (integrable) singularity at the initial point $a$ 
\begin{equation}    \label{eq:sode}
x'(t) = (t-a)^{\alpha-1}F(t, x(t))
\end{equation}
and satisfies the initial condition \eqref{eq:initial-condition} too. 
Conversely, if ${x(t) \in C^1((a,\infty),\Rset)}$ is a solution of IVP \eqref{eq:sode}, \eqref{eq:initial-condition}, 
then ${x(t)}$ is an $\alpha$-differentiable solution of IVP \eqref{eq:nonlinear-system}, \eqref{eq:initial-condition}. 
It must be noted that the IVP \eqref{eq:sode}, \eqref{eq:initial-condition} is obviously equivalent of the IVP with initial condition \eqref{eq:initial-condition} for the following Volterra integral equation
\begin{equation}   \label{eq:volterra-integral-equations}
x(t) = x_a + \int_a^t(s-a)^{\alpha-1}F(s, x(s))ds
\end{equation}
in the following sense: Every solution ${x(t) \in C([a,\infty),\Rset)}$ of the 
IVP \eqref{eq:volterra-integral-equations}, \eqref{eq:initial-condition} is a solution 
of the IVP \eqref{eq:sode}, \eqref{eq:initial-condition}, (${x(t) \in C^1((a,\infty),\Rset)}$) and vice versa. 

Thus we can conclude that since the considered IVP \eqref{eq:sode}, \eqref{eq:initial-condition} 
and IVP \eqref{eq:volterra-integral-equations}, \eqref{eq:initial-condition} are well studied 
(fundamental theory, standard type of stabilities and etc..) 
then from mathematical point of view the introduced conformable derivatives 
does not provide any real improvement to the theory of fractional calculus
in compare with the classical fractional derivatives. 
Furthermore, they bring noting new at least as mathematical advantages in 
the field of the ordinary differential equations with fractional derivatives.

%
%

\section{Conclusions}
\label{sec:Conclusions}

The existence of a direct one-to-one connection between the conformable derivative and the first order derivative
opens the following question - can a research with conformable derivatives be considered as new 
or is well known old, rewritten in the terms of the conformable calculus?
Our answer is - if the lower terminal does not actively participate in the research object - definitely the second one.

An other question is, can we construct some new valuable mathematical model with conformable derivative for investigation, 
which describes at least one real word phenomena. 
From mathematical point of view may be a consideration of differential equations with conformable derivative and deviating argument 
(delayed or neutral type) can at least compensate the loss of the dependence from the past history of the evolutionary process. 
In this direction it is possible that the study of initial problems for these equations, with different types of initial functions, 
in the case when the lower terminal is the right end of the initial interval, will be not trivial and may be valuable.

%

%
%

\end{document}